\newcommand{\CC}{\mathbb{C}}
\newcommand{\PP}{\mathbb{P}}
\newcommand{\ZZ}{\mathbb{Z}}
\newcommand{\Aut}{\operatorname{Aut}}
\newcommand{\Mod}{\operatorname{Mod}}
\newcommand{\SMod}{\operatorname{SMod}}
\newcommand{\sm}{\setminus}
\newcommand{\wt}[1]{\widetilde{#1}}
\newcommand{\abs}[1]{\left\lvert #1 \right\rvert}
\theoremstyle{definition}
\newtheorem*{ques}{Question}
\theoremstyle{plain}
\newtheorem{thm}{Theorem}[section]
\newtheorem{lem}[thm]{Lemma}
\newtheorem{cor}[thm]{Corollary}
\newcommand{\p}[1]{\smallskip\noindent {\bf #1:}} %creates a paragraph title
\title{Lifting Homeomorphisms and cyclic branched covers of spheres}
\author{Tyrone Ghaswala}
\address{Department of Pure Mathematics, University of Waterloo, Waterloo, ON, N2L 3G1,
Canada}
\email{ty.ghaswala@gmail.com}
\author{Rebecca R. Winarski}
\address{Department of Mathematical sciences, University of Wisconsin-Milwaukee, Milwaukee, WI 53211-3029, USA}
\email{rebecca.winarski@gmail.com}
\begin{document}
\begin{abstract}
We characterize the cyclic branched covers of the 2-sphere where every homeomorphism of the sphere lifts to a homeomorphism of the covering surface.  This answers a question that appeared in an early version of the erratum of Birman and Hilden \cite{erratum}.
\end{abstract}
\bibliographystyle{plain}
\maketitle
\section{Introduction}
Let $\Sigma$ be a closed orientable surface.  Let $p:\Sigma\rightarrow \Sigma_0$ be a cyclic branched covering space of the sphere $\Sigma_0$.  We will assume all homeomorphisms of $\Sigma_0$ preserve the set of branch points.  For brevity, we will say a homeomorphism $f$ of $\Sigma_0$ {\it lifts} if there exists a homeomorphism $\widetilde{f}$ of $\Sigma$ such that $p\widetilde{f}=fp$.

The 2-fold cover $\Sigma\rightarrow\Sigma_0$ induced by the hyperelliptic involution of $\Sigma$ is of interest in the study of low dimensional topology eg. Brendle--Margalit--Putman \cite{BMP}, Johnson--Schmoll \cite{JS}, Morifuji \cite{morifuji} and algebraic geometry eg. Gorchinskiy--Viviani \cite{GV}, Hidalgo \cite{hidalgo}.  In this 2-fold covering space, every homeomorphism of $\Sigma_0$ lifts.  However, it is not true in general that every homeomorphism of $\Sigma_0$ lifts to a homeomorphism of the covering surface.

We answer the question that appeared in an early version of \cite{erratum}:  
\begin{ques}[Birman--Hilden]\label{main_ques}
Let $\Sigma$ be a closed orientable surface, and $\Sigma_0$ a 2-sphere.  For which cyclic branched covering spaces of the sphere $\Sigma\rightarrow \Sigma_0$ does every homeomorphism of $\Sigma_0$ lift?
\end{ques}

Let $\Sigma_{0}^{\circ}$ denote a sphere with punctures, and $x_0\in\Sigma_0^{\circ}$.  Each characteristic subgroup of $\pi_1(\Sigma_0^{\circ},x_0)$ corresponds to a branched covering space of $\Sigma_0$ where every homeomorphism lifts.  
However, cyclic subgroups of $\pi_1(\Sigma_0^{\circ},x_0)$ are rarely characteristic in $\pi_1(\Sigma_0^{\circ},x_0)$.  In fact, in the case of the hyperelliptic involution, the corresponding subgroup of $\pi_1(\Sigma_0^{\circ},x_0)$ is not characteristic, but all homeomorphisms of $\Sigma_0$ lift.

Let $A$ be a finite abelian group.  An {\it admissible $k$-tuple} is a tuple $(a_1,\ldots,a_k) \in (A\sm\{0\})^k$ such that $\sum_{i=1}^k a_i = 0$ and $\{a_1,\ldots,a_k\}$ is a generating set for $A$.  An admissible $k$-tuple defines a cyclic covering space over a punctured sphere as follows:

Let $\Sigma_{0,k}$ be a sphere with $k$ punctures, and fix an enumeration of the punctures. Let $x_i$ be the homology class of a loop surrounding the $i$th puncture that is oriented counterclockwise.  The homomorphism $\phi:H_1(\Sigma_{0,k};\ZZ) \to A$ defined by $\phi(x_i) = a_i$ is surjective and therefore determines a regular cover of $\Sigma_{0,k}$ with deck group $A$.  By filling in the punctures we obtain a regular branched cover of $\Sigma_0$.

\begin{thm}\label{maintheorem}
Let $A$ be a finite cyclic group, and $(a_1,\ldots,a_k)$ an admissible $k$-tuple.  Let $\Sigma\rightarrow \Sigma_0$ be the cyclic branched cover of the sphere with deck group $A$ and $k$ branch points defined by the admissible tuple.  Every homeomorphism of $\Sigma_0$ lifts if and only if one of the following is true:
\begin{itemize}
\item There is an isomorphism $\delta:A \to \ZZ/n\ZZ$ with $k \equiv 0 \mod n$ such that $\delta (a_i) = 1$ for all $i$.
\item $k = 2$ and there is an isomorphism $\delta:A \to \ZZ/n\ZZ$ for some $n \geq 3$ such that $\delta(a_1) = 1$ and $\delta(a_2) = -1$.
\end{itemize}
\end{thm}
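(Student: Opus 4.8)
The plan is to translate the topological lifting problem into a purely algebraic condition on the tuple $(a_1,\dots,a_k)$, and then to carry out a short computation in $\ZZ/n\ZZ$. Write $A=\ZZ/n\ZZ$ and let $\phi\colon H_1(\Sigma_{0,k};\ZZ)\to A$ be the defining homomorphism, where $H_1(\Sigma_{0,k};\ZZ)\cong\ZZ^k/\langle x_1+\cdots+x_k\rangle$ and $\phi(x_i)=a_i$. The unbranched cover of $\Sigma_{0,k}$ corresponds to $K=\ker\phi$, which is normal since $A$ is abelian. A homeomorphism $f$ of $\Sigma_0$ preserving the branch points lifts to $\Sigma$ if and only if its restriction to $\Sigma_{0,k}$ lifts to the unbranched cover (the lift then extends over the filled-in branch points); as the cover is regular, this happens exactly when $f_*K=K$, i.e. when $\phi\circ f_*=\alpha\circ\phi$ for some $\alpha\in\Aut(A)$. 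Because $A$ is cyclic, $\Aut(A)=(\ZZ/n\ZZ)^\times$ acts by multiplication by a unit.

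Next I would identify the relevant action on homology. The map $\Mod^+(\Sigma_{0,k})\to S_k$ recording the permutation of the punctures is surjective, and its kernel (the pure mapping class group) acts trivially on $H_1(\Sigma_{0,k};\ZZ)$, since each generator $x_i$ is carried to a conjugate of itself. Hence an orientation-preserving $f$ realizing a permutation $\sigma$ acts by $x_i\mapsto x_{\sigma(i)}$ (an orientation-reversing $f$ gives $x_i\mapsto -x_{\sigma(i)}$, which yields the same condition below since $-1$ is a unit). Combined with the previous paragraph, and using that every $\sigma\in S_k$ is realized by some homeomorphism, the statement ``every homeomorphism lifts'' becomes:
\begin{equation*}
\text{for every } \sigma\in S_k \text{ there is a unit } u\in(\ZZ/n\ZZ)^\times \text{ with } a_{\sigma(i)}=u\,a_i \text{ for all } i.
\end{equation*}

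The heart of the proof is to solve this condition. For a transposition $\tau=(i\;j)$ the relation $\tau^2=\mathrm{id}$ forces $u^2=1$, since $a_i=u^2a_i$ for all $i$ and the $a_i$ generate $A$. Assume $k\ge 3$; I claim all $a_i$ coincide. Fix distinct indices $i,j,l$. The transposition $(i\;j)$ supplies a unit $u$ with $a_j=u\,a_i$, $u\,a_l=a_l$, and $u^2=1$, while $(j\;l)$ supplies a unit $w$ with $a_l=w\,a_j$, $w\,a_i=a_i$, and $w^2=1$. Substituting gives $a_l=w u\,a_i$, and applying $u\,a_l=a_l$ yields $w u^2 a_i=w u\,a_i$, hence $w\,a_i=w u\,a_i$; multiplying by $w$ gives $a_i=u\,a_i=a_j$. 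Since $i,j$ were arbitrary and $k\ge 3$ always provides a third index $l$, all the $a_i$ are equal, say $a_i=a$. As they generate $A$, the element $a$ is a generator, so there is an isomorphism $\delta\colon A\to\ZZ/n\ZZ$ with $\delta(a)=1$, and $\sum a_i=0$ becomes $k\equiv 0\bmod n$. This is the first case.

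Finally I would dispose of $k=2$. Here admissibility forces $a_2=-a_1$ with $\{a_1,-a_1\}$ generating $A$, so $a_1$ is a generator (a unit) and the only nontrivial permutation lifts via $u=-1$; thus every admissible $2$-tuple automatically lifts. Choosing $\delta$ with $\delta(a_1)=1$ gives $\delta(a_2)=-1$: when $n=2$ this collapses to $(1,1)$ with $2\equiv 0\bmod 2$, the first case, and when $n\ge 3$ it is the second case. For the converse direction, each listed family visibly satisfies the displayed condition (with $u=1$ in the first case and $u=\pm1$ in the second), completing the equivalence. I expect the main obstacle to be the initial reduction: justifying cleanly that lifting over the \emph{branched} cover is detected by the homology action, and that this action is exactly the permutation action of $S_k$ and nothing more. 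Once that is secured, the remaining work is the short transposition computation above.
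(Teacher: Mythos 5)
Your proposal is correct, and its overall skeleton matches the paper's: both reduce lifting to the homological criterion (your condition $\phi\circ f_* = \alpha\circ\phi$, $\alpha\in\Aut(A)$, is the paper's Lemma~\ref{lifting_criterion}), both note that homeomorphisms realize exactly the permutation action $x_i\mapsto \pm x_{\sigma(i)}$ on $H_1(\Sigma_{0,k};\ZZ)$, and both then solve the resulting combinatorial problem. Where you genuinely diverge is in the proof of that combinatorial step (the paper's Lemma~\ref{perm_aut}). The paper splits into cases according to whether the $a_i$ are all equal: if not, a transposition argument shows they must be pairwise \emph{distinct}, whence $\sigma\mapsto\psi_\sigma$ embeds $S_k$ into $\Aut(A)$, and since $\Aut(A)$ is abelian for $A$ cyclic this forces $k=2$. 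You instead split on $k$: for $k\ge 3$ you run an explicit computation with units of $\ZZ/n\ZZ$ --- transpositions force $u^2=1$, and the three-index calculation $a_l = wu\,a_i$, etc., forces $a_i=a_j$ --- concluding all $a_i$ are equal, with $k=2$ handled by inspection. The two mechanisms rest on the same underlying fact (commutativity of $\Aut(A)$: the paper invokes it structurally, you use it silently when commuting $u$ and $w$), but your route avoids the paper's intermediate distinctness claim and the implicit verification that $\sigma\mapsto\psi_\sigma$ is an injective homomorphism, at the cost of a longer hands-on calculation; the paper's argument is slicker and makes transparent exactly why cyclicity of $A$ (rather than just finiteness) matters. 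Both proofs treat the branched-to-unbranched reduction and the homological lifting criterion at the same level of rigor (the paper explicitly omits the proofs of its two group-theoretic facts), so nothing in your sketch is a gap relative to the published argument.
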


The 2-fold cover induced by the hyperelliptic involution is defined by the admissible tuple $(1,\ldots,1)$ with deck group equal to $\ZZ/2\ZZ$.  Our theorem provides an alternative proof that for the cover induced by the hyperelliptic involution, every homeomorphism lifts.

\p{Superelliptic Curves}  Choose distinct points $z_1,\ldots,z_t \in \CC$.  Any cyclic branched cover of the sphere can be modeled by an irreducible plane curve $C$ of the form:
\begin{equation}\label{superelliptic_curve}
y^n = (x - z_1)^{a_1} \cdots (x - z_t)^{a_t}
\end{equation}
for some $n \geq 2$ and integers $1 \leq a_i \leq n-1$.  Indeed, let $\wt C$ be the normalization of the projective closure of the affine curve $C$.  Projection onto the $x$ axis gives a $n$-sheeted cyclic branched covering $\wt C \to \PP^1$ that is branched at each $z_i$ and possibly at infinity.  There is branching at infinity if and only if $\sum_{i=1}^t a_i \not\equiv 0 \mod n$.

A cyclic branched covering space defined by (\ref{superelliptic_curve}) has deck group $A \cong \ZZ/n\ZZ$.  Such a cover is defined by the admissible tuple $(a_1,\ldots,a_t)$ if there is no branching at infinity.  If there is a branch point at infinity, then the cover is defined by $(a_1,\ldots,a_t, -\sum_{i=1}^t a_i)$ \cite[Ch 2]{Rohde}.  The irreducibility of $C$ ensures that the $\{a_i\}$ form a generating set for $A$.  We have an immediate corollary of Theorem \ref{maintheorem}.

\begin{cor} \label{lifting}
Let $\widetilde{C}\rightarrow\PP^1$ be a cyclic branched cover defined by an irreducible superelliptic curve as in equation (\ref{superelliptic_curve}).  Then every homeomorphism of $\PP^1$ lifts if and only if one of the following is true.
\begin{itemize}
\item $a_1 = \cdots = a_t$ and $t \equiv 0 \text{ or } -1 \mod n$,
\item $n \geq 3$ and $t = 1$, or
\item $n \geq 3$, $t = 2$ and $a_1 \equiv -a_2 \mod n$.
\end{itemize}
\end{cor}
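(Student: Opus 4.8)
The plan is to translate the two conditions of Theorem \ref{maintheorem} directly into conditions on the superelliptic data $(n, a_1, \dots, a_t)$, keeping careful track of whether or not the cover is branched at infinity. Since $A \cong \ZZ/n\ZZ$, every isomorphism $\delta : A \to \ZZ/n\ZZ$ is multiplication by a unit $u \in (\ZZ/n\ZZ)^{\times}$. So the first condition of the theorem, that $\delta(a_i) = 1$ for all $i$, holds for some $\delta$ exactly when all entries of the defining admissible tuple are equal to a common generator (namely $u^{-1}$); and the second condition, that $\delta(a_1) = 1$ and $\delta(a_2) = -1$, holds for some $\delta$ exactly when $k = 2$, $n \geq 3$, and the two entries are negatives of one another. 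First I would record these two reformulations, noting that the needed invertibility of the relevant entry is automatic from admissibility, since the entries generate $A$.

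Then I would split into the two cases identified in the discussion preceding the corollary. If $\sum_{i=1}^t a_i \equiv 0 \pmod n$ there is no branching at infinity, the defining tuple is $(a_1, \dots, a_t)$, and $k = t$. If $\sum_{i=1}^t a_i \not\equiv 0 \pmod n$ there is branching at infinity, the defining tuple is $(a_1, \dots, a_t, a_{t+1})$ with $a_{t+1} \equiv -\sum_{i=1}^t a_i \pmod n$, and $k = t+1$.

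Next I would feed the two reformulations through each case. For the ``all equal'' reformulation, when $a_1 = \cdots = a_t = a$ with $a$ a generator, the exponent at infinity is $a_{t+1} \equiv -ta$, which equals $a$ precisely when $(t+1)a \equiv 0$, i.e. $t \equiv -1 \pmod n$; whereas the no-branching hypothesis $ta \equiv 0$ holds precisely when $t \equiv 0 \pmod n$. Matching these against the requirement $k \equiv 0 \pmod n$ (with $k = t$ in the first case and $k = t+1$ in the second) shows the theorem's first condition is equivalent to $a_1 = \cdots = a_t$ together with $t \equiv 0$ or $-1 \pmod n$, which is the first bullet. For the second reformulation, $k = 2$ forces either $t = 2$ with no branching, where $\sum a_i \equiv 0$ already gives $a_1 \equiv -a_2$, yielding the third bullet, or $t = 1$ with branching at infinity, where the tuple is automatically $(a_1, -a_1)$ and irreducibility makes $a_1$ a unit, yielding the second bullet.

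Finally I would verify the small-modulus edge cases to confirm the bullets are mutually consistent: for instance that $t = 1$, $n = 2$ falls under the first bullet via $1 \equiv -1 \pmod 2$, and that the first and third bullets cannot overlap once $n \geq 3$ (since $a_1 = a_2$ and $a_1 \equiv -a_2$ would force $n \mid 2$). The only real subtlety, and the step I expect to require the most care, is the bookkeeping around branching at infinity, since it is exactly the presence or absence of the extra exponent $a_{t+1}$ that produces the dichotomy $t \equiv 0$ versus $t \equiv -1 \pmod n$ in the first bullet, and that collapses the ``$k=2$'' condition into the two separate cases $t=1$ and $t=2$.
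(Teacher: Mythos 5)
Your proposal is correct and follows essentially the same route as the paper: the paper treats the corollary as immediate from Theorem \ref{maintheorem} via the preceding discussion, which identifies the defining admissible tuple as $(a_1,\ldots,a_t)$ or $(a_1,\ldots,a_t,-\sum_{i=1}^t a_i)$ according to whether there is branching at infinity, with irreducibility supplying the generating condition. Your case analysis simply writes out in detail the translation the paper leaves implicit, and it is accurate, including the edge cases.
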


\p{A cover where not all homeomorphisms lift}
We may represent a genus 2 surface $\Sigma_2$ as a 10-gon with sides identified as in figure \ref{not_all_lift}.  A counterclockwise rotation by $2\pi/5$ of the 10-gon induces an action of $\ZZ/5\ZZ$ on $\Sigma_2$.  The resulting quotient space is homeomorphic to a sphere.  The quotient map is a branched covering map $\Sigma_2 \to \Sigma_0$ branched at 3 points.  The preimages of the branch points in the 10-gon are the center and two distinct orbits of vertices under rotation.  The cover corresponds to the admissible tuple $(1,1,3)$ with the deck group equal to $\ZZ/5\ZZ$. Therefore, by Theorem \ref{maintheorem}, not all homeomorphisms of $\Sigma_0$ lift.

\begin{center}
\begin{tikzpicture}[>=stealth, scale=1.5]
	\def \x {36};
	\draw[decoration={markings,mark=at position .55 with {\arrow[scale=2]{<}}}, postaction={decorate}] ({cos(0)},{sin(0)}) -- node[right]{$a_1$} ({cos(\x)},{sin(\x)});
	\draw[decoration={markings,mark=at position .55 with {\arrow[scale=2]{>}}}, postaction={decorate}] ({cos(\x)},{sin(\x)}) -- node[label={[label distance=-.25cm]54:$a_5$}]{} ({cos(2*\x)},{sin(2*\x)});
	\draw[decoration={markings,mark=at position .55 with {\arrow[scale=2]{<}}}, postaction={decorate}] ({cos(2*\x)},{sin(2*\x)}) -- node[above]{$a_2$} ({cos(3*\x)},{sin(3*\x)});
	\draw[decoration={markings,mark=at position .55 with {\arrow[scale=2]{>}}}, postaction={decorate}] ({cos(3*\x)},{sin(3*\x)}) -- node[label={[label distance=-.25cm]126:$a_1$}]{} ({cos(4*\x)},{sin(4*\x)});
	\draw[decoration={markings,mark=at position .55 with {\arrow[scale=2]{<}}}, postaction={decorate}] ({cos(4*\x)},{sin(4*\x)}) -- node[left]{$a_3$} ({cos(5*\x)},{sin(5*\x)});
	\draw[decoration={markings,mark=at position .55 with {\arrow[scale=2]{>}}}, postaction={decorate}] ({cos(5*\x)},{sin(5*\x)}) -- node[left]{$a_2$} ({cos(6*\x)},{sin(6*\x)});
	\draw[decoration={markings,mark=at position .55 with {\arrow[scale=2]{<}}}, postaction={decorate}] ({cos(6*\x)},{sin(6*\x)}) -- node[label={[label distance=-.25cm]234:$a_4$}]{} ({cos(7*\x)},{sin(7*\x)});
	\draw[decoration={markings,mark=at position .55 with {\arrow[scale=2]{>}}}, postaction={decorate}] ({cos(7*\x)},{sin(7*\x)}) -- node[below]{$a_3$} ({cos(8*\x)},{sin(8*\x)});
	\draw[decoration={markings,mark=at position .55 with {\arrow[scale=2]{<}}}, postaction={decorate}] ({cos(8*\x)},{sin(8*\x)}) -- node[label={[label distance=-.25cm]306:$a_5$}]{} ({cos(9*\x)},{sin(9*\x)});
	\draw[decoration={markings,mark=at position .55 with {\arrow[scale=2]{>}}}, postaction={decorate}] ({cos(9*\x)},{sin(9*\x)}) -- node[right]{$a_4$} ({cos(10*\x)},{sin(10*\x)});
\end{tikzpicture}
\vspace{-.3cm}

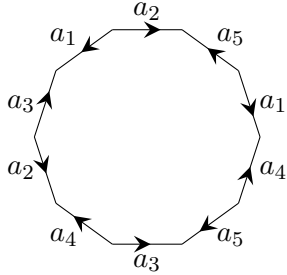
\captionof{figure}{A rotation by $2\pi/5$ generates an action of $\ZZ/5\ZZ$ on $\Sigma_2$.}
\label{not_all_lift}
\end{center}

\p{Application to the mapping class group}
Let $X$ be an orientable surface possibly with marked points.  The mapping class group of $X$, denoted $\Mod(X)$, is the group of orientation-preserving homeomorphisms of $X$ that preserve the set of marked points up to isotopies that preserve the set of marked points.

Let $\Sigma$ be a surface, and $\Sigma\rightarrow\Sigma_0$ be a cyclic branched cover with deck group $A$.  The symmetric mapping class group of $\Sigma$, denoted $\SMod(\Sigma)$, is the subgroup of $\Mod(\Sigma)$ comprised of isotopy classes of fiber-preserving homeomorphisms of $\Sigma$.  By work of Birman and Hilden \cite[Theorem 3]{BH}, the quotient $\SMod(\Sigma)/A$ is isomorphic to a finite-index subgroup of $\Mod(\Sigma_0)$.  When the conditions of Theorem \ref{maintheorem} are satisified, $\SMod(\Sigma)/A$ is isomorphic to $\Mod(\Sigma_0)$.   This gives us the following corrected statement of Theorem 5 in \cite{BH}.

\begin{thm} \label{theorem5}
Let $A$ be a finite cyclic group, and $(a_1,\ldots,a_k)$ an admissible $k$-tuple.  Let $\Sigma\rightarrow \Sigma_0$ be the cyclic branched cover of the sphere with deck group $A$ and $k$ branch points defined by the admissible tuple.  The quotient $\SMod(\Sigma)/A$ is isomorphic to $\Mod(\Sigma_0)$ if there is an isomorphism $\delta:A \to \ZZ/n\ZZ$ with $k \equiv 0 \mod n$ such that $\delta (a_i) = 1$ for all $i$, and $(n,k)$ is not equal to $(2,2)$, $(2,4)$, or $(3,3)$.
\end{thm}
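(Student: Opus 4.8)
The hypothesis of the theorem is precisely the first of the two conditions in Theorem \ref{maintheorem} under which every homeomorphism of $\Sigma_0$ lifts. My plan is to feed this into the Birman--Hilden correspondence of \cite{BH}: the quotient $\SMod(\Sigma)/A$ maps into $\Mod(\Sigma_0)$ with image exactly the subgroup of liftable mapping classes, and once every homeomorphism lifts this image is all of $\Mod(\Sigma_0)$. The only thing standing between this and the desired isomorphism is the injectivity half of the correspondence, the Birman--Hilden property, which I will need to guarantee by a genus computation.

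More precisely, the first step is to recall the structure of the map in Birman--Hilden's Theorem 3. A fiber-preserving homeomorphism of $\Sigma$ descends to a homeomorphism of $\Sigma_0$ preserving the branch points, and this induces a homomorphism $\SMod(\Sigma)/A \to \Mod(\Sigma_0)$. Its image is exactly $\operatorname{LMod}(\Sigma_0)$, the group of isotopy classes of liftable homeomorphisms, since a mapping class lies in the image if and only if it has a liftable representative; liftability depends only on whether the induced action on $\pi_1(\Sigma_0^{\circ},x_0)$ preserves $\ker\phi$, and hence only on the mapping class. By Theorem \ref{maintheorem}, the hypothesis forces every homeomorphism, and therefore every mapping class, to lift, so $\operatorname{LMod}(\Sigma_0) = \Mod(\Sigma_0)$. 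Thus the map is surjective, and it remains to establish injectivity.

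Injectivity is the Birman--Hilden property: isotopic fiber-preserving homeomorphisms of $\Sigma$ are fiber-isotopic. This holds whenever the covering surface $\Sigma$ has genus at least $2$, and this is the step I expect to be the crux, since it is exactly where the three excluded covers must drop out. I would compute $g(\Sigma)$ by Riemann--Hurwitz: because $\delta(a_i) = 1$ generates $\ZZ/n\ZZ$, every branch point is totally ramified, with a single preimage of ramification index $n$, so $\chi(\Sigma) = 2n - k(n-1)$ and
\[
g(\Sigma) = \frac{(k-2)(n-1)}{2}.
\]
Then $g(\Sigma) \geq 2$ is equivalent to $(k-2)(n-1) \geq 4$.

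The final step is the finite check that, subject to the standing constraints $n \geq 2$ (as $A$ is nontrivial) and $k \equiv 0 \bmod n$ (with $k$ a positive multiple of $n$) coming from admissibility and the hypothesis, the only triples with $(k-2)(n-1) \leq 3$ are $(n,k) \in \{(2,2),(2,4),(3,3)\}$, where $\Sigma$ is a sphere or a torus. Indeed, for $n \geq 4$ one already has $(k-2)(n-1) \geq (n-2)(n-1) \geq 6$, while the cases $n = 2$ and $n = 3$ are dispatched by inspection of the small multiples of $n$. Excluding exactly these three cases therefore guarantees $g(\Sigma) \geq 2$, so the Birman--Hilden property holds, the map $\SMod(\Sigma)/A \to \Mod(\Sigma_0)$ is an injection onto $\operatorname{LMod}(\Sigma_0) = \Mod(\Sigma_0)$, and the isomorphism $\SMod(\Sigma)/A \cong \Mod(\Sigma_0)$ follows.
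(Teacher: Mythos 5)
Your proposal is correct and takes essentially the same route as the paper, which proves Theorem \ref{theorem5} by citing Birman--Hilden's Theorem 3 \cite{BH}, invoking Theorem \ref{maintheorem} for surjectivity, and excluding the tuples where $\Sigma$ fails to be hyperbolic. The only difference is that you spell out the Riemann--Hurwitz computation $g(\Sigma) = (k-2)(n-1)/2$ showing that the excluded cases $(2,2)$, $(2,4)$, $(3,3)$ are precisely the non-hyperbolic ones, a fact the paper asserts without calculation.
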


Theorem 5 from \cite{BH} requires that $\Sigma$ is hyperbolic.  In the statement of Theorem \ref{theorem5}, we have excluded the cases from Theorem \ref{maintheorem} where $\Sigma$ is not hyperbolic.

\p{Acknowledgements} The authors would like to thank Joan Birman and Mike Hilden their encouragement and support. We would also like to thank Dan Margalit, David McKinnon, and Doug Park for helpful conversations regarding this paper.  We would also like to thank the referee for helpful comments.

\section{Proof of Main Theorem}
Let $\Sigma_{0,k}$ be a 2-sphere with $k$ punctures.  Any finite sheeted regular cover with base space $\Sigma_{0,k}$ and abelian deck group $A$ is determined by a surjective homomorphism $\phi:H_1(\Sigma_{0,k};\ZZ) \to A$.   The homomorphism $\phi$ is unique up to an automorphism of $A$.

\p{Reduction to the unbranched case}  A branched cover $\Sigma \to \Sigma_0$ with $k$ branch points induces a cover of $\Sigma_{0,k}$ by removing the branch points and their preimages in $\Sigma$.  Conversely, a cover of $\Sigma_{0,k}$ can be completed to a branched cover where each puncture is filled with a branch point. We will restrict homeomorphisms of $\Sigma_0$ to $\Sigma_{0,k}$ and extend homeomorphisms of $\Sigma_{0,k}$ to homeomorphisms of $\Sigma_0$ when it is convenient.

Fix an enumeration of the punctures and let $x_i$ be the homology class of a loop surrounding the $i$th puncture that is oriented counterclockwise.  Each $x_i \in H_1(\Sigma_{0,k};\ZZ)$ is supported on a neighborhood of the $i$th puncture.  Let $f$ be a homeomorphism of $\Sigma_{0,k}$.  The automorphism $f_*$ of $H_1(\Sigma_{0,k},\ZZ)$ is determined by the permutation $f$ induces on the punctures of $\Sigma_{0,k}$.  Indeed, let $\sigma \in S_k$ be the permutation induced by $f$.  If $f$ is orientation preserving, then $f_*(x_i) = x_{\sigma(i)}$.  If $f$ is orientation reversing, then $f_*(x_i) = -x_{\sigma(i)}$.

Recall that an admissible $k$-tuple defines a surjective homomorphism $\phi:H_1(\Sigma_{0,k};\ZZ) \to A$ by $\phi(x_i) = a_i$ and therefore defines a branched cover of $\Sigma_0$.  Another admisible $k$-tuple $(a_1',\ldots,a_k')$ determines an equivalent covering space if and only if there is an automorphism $\psi \in \Aut(A)$ such that $\psi(a_i) = a_i'$ for all $i$.

\begin{lem} \label{lifting_criterion}
Let $A$ be a finite abelian group, and $(a_1,\ldots, a_k)$ an admissible $k$-tuple.  Let $\Sigma \to \Sigma_{0,k}$ be the covering space defined by this tuple.  Let $f$ be a homeomorphism of $\Sigma_{0,k}$, and let $\sigma \in S_k$ be the permutation of the punctures induced by $f$.  The homeomorphism $f$ lifts if and only if there is an automorphism $\psi \in \Aut(A)$ such that $ \psi(a_i) = a_{\sigma(i)} $ for all $i$.
\end{lem}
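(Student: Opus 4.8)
The plan is to run everything through the standard covering-space lifting criterion and then translate the resulting condition on $\pi_1$ into a condition on $A$, using that the cover is regular with abelian deck group. Write $H = p_*\pi_1(\Sigma) \leq \pi_1(\Sigma_{0,k},x_0)$ for the subgroup determining the cover. Because the cover is regular and $A$ is abelian, $H$ is the full preimage of $K := \ker(\phi\colon H_1(\Sigma_{0,k};\ZZ)\to A)$ under the Hurewicz map $\pi_1 \to H_1$, and $H$ is normal of finite index $\abs{A}$. The composite $fp\colon \Sigma \to \Sigma_{0,k}$ is again a covering map, and by the lifting criterion a lift $\wt f$ with $p\wt f = fp$ exists precisely when $(fp)_*\pi_1(\Sigma) = f_*(H) \subseteq H$; since $f_*$ is an isomorphism and $H$ has finite index, this forces $f_*(H) = H$. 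As $H$ is normal, the choice of basepoint in the fiber is immaterial, and when $f_*(H) = H$ the covers $p$ and $fp$ are equivalent, so the lift $\wt f$ is an isomorphism of covers and hence a homeomorphism. Thus ``$f$ lifts'' is equivalent to $f_*(H) = H$.

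Next I would push this condition down to homology. Since $A$ is abelian, $f_*(H) = H$ if and only if the induced map on $H_1(\Sigma_{0,k};\ZZ)$, which I also denote $f_*$, satisfies $f_*(K) = K$. Using the description recalled just before the lemma, $f_*(x_i) = \varepsilon\, x_{\sigma(i)}$, where $\varepsilon = 1$ if $f$ is orientation preserving and $\varepsilon = -1$ otherwise. Now $f_*(K) = K$ holds if and only if $f_*$ descends to an automorphism $\psi$ of $A \cong H_1(\Sigma_{0,k};\ZZ)/K$, equivalently $\psi\circ\phi = \phi\circ f_*$; evaluating on the generators gives $\psi(a_i) = \phi(f_*(x_i)) = \varepsilon\, a_{\sigma(i)}$ for all $i$. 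Conversely, if an automorphism $\psi$ with $\psi(a_i) = \varepsilon\, a_{\sigma(i)}$ exists, then $\psi\circ\phi$ and $\phi\circ f_*$ agree on the generating set $\{x_i\}$, hence are equal, which gives $\phi(f_*(K)) = \psi(\phi(K)) = 0$ and therefore $f_*(K)\subseteq K$, so $f_*(K)=K$. Hence $f$ lifts if and only if there is some $\psi \in \Aut(A)$ with $\psi(a_i) = \varepsilon\, a_{\sigma(i)}$ for all $i$.

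Finally I would remove the dependence on orientation. The key observation is that $-\operatorname{id}$ is always an automorphism of the abelian group $A$, so $\psi \mapsto (-\operatorname{id})\circ\psi$ is a bijection of $\Aut(A)$. Consequently there is an automorphism sending $a_i \mapsto -a_{\sigma(i)}$ for all $i$ if and only if there is one sending $a_i \mapsto a_{\sigma(i)}$ for all $i$. Combining this with the previous paragraph yields the stated criterion with no reference to $\varepsilon$: the homeomorphism $f$ lifts if and only if there is $\psi \in \Aut(A)$ with $\psi(a_i) = a_{\sigma(i)}$ for all $i$.

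The routine inputs here are the lifting criterion, the finite-index argument forcing $f_*(H)=H$ from $f_*(H)\subseteq H$, and the descent of $f_*$ to the quotient $A$. The only genuinely substantive step is the last one: the orientation-reversing case collapses to the same condition precisely because $-\operatorname{id}\in\Aut(A)$. I expect that to be the main point to get right, together with carefully justifying that ``$f$ lifts to a \emph{homeomorphism}'' is equivalent to the setwise equality $f_*(H)=H$ rather than merely an inclusion.
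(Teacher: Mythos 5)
Your proof is correct and follows essentially the same route as the paper's: both reduce lifting to the condition $f_*(\ker\phi)=\ker\phi$, convert that to the existence of $\psi\in\Aut(A)$ with $\psi\phi=\phi f_*$, evaluate on the generators $x_i$, and absorb the orientation-reversing sign via the automorphism $a\mapsto -a$. The only difference is that you spell out the covering-space facts (the $\pi_1$ lifting criterion, the finite-index argument forcing equality, and the descent to homology) that the paper states without proof, which is a welcome but not substantively different elaboration.
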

\begin{proof}
Let $\phi:H_1(\Sigma_{0,k};\ZZ) \to A$ be the homomorphism defining the cover, which is defined by $\phi(x_i) = a_i$.  Let $f_*$ be the automorphism of $H_1(\Sigma_{0,k};\ZZ)$ induced by $f$.
We use the following facts:
\begin{enumerate}
\item The equality $f_*(\ker(\phi)) = \ker(\phi)$ holds if and only if $\ker(\phi f_*) = \ker(\phi)$.
\item Let $f,g:G \to A$ be surjective homomorphisms. Then  $\ker(f) = \ker(g)$ if and only if $f = \xi g$ for some $\xi \in \Aut(A)$.
\end{enumerate}
We omit the proofs of these facts.

The homeomorphism $f$ lifts if and only if $f_*(\ker(\phi)) = \ker(\phi)$.  Therefore by fact (1), $f$ lifts if and only if $\ker(\phi f_*)  =\ker(\phi)$. By fact (2), $f$ lifts if and only if there exists an automorphism $\psi \in \Aut(A)$ such that $\phi f_* = \psi \phi$.  If $f$ is orientation preserving, then $a_{\sigma(i)} = \phi f_* (x_i) = \psi \phi(x_i) = \psi (a_i)$ for all $i$ and the result follows.

The map $a \mapsto -a$ is an automorphism of an abelian group.  If $f$ is orientation reserving then we compose $a \mapsto -a$ with the automorphism $\psi$ in the orientation-preserving case.
\end{proof}

\begin{lem} \label{perm_aut}
Let $A$ be a finite cyclic group, and let $(a_1,\ldots,a_k)$ be an admissible tuple.  For every permutation $\sigma \in S_k$, there exists $\psi \in \Aut(A)$ such that $\psi(a_i) = a_{\sigma(i)}$ for all $i$ if and only if one of the following is true:
\begin{itemize}
\item There is an isomorphism $\delta:A \to \ZZ/n\ZZ$ with $k \equiv 0 \mod n$ such that $\delta (a_i) = 1$ for all $i$.
\item $k = 2$, and there is an isomorphism $\delta:A \to \ZZ/n\ZZ$ for some $n \geq 3$ such that $\delta(a_1) = 1$ and $\delta(a_2) = -1$.
\end{itemize}
\end{lem}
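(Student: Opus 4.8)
The plan is to work in coordinates. Since $A$ is cyclic, fix an isomorphism $A\cong\ZZ/n\ZZ$; then $\Aut(A)\cong(\ZZ/n\ZZ)^\times$ acts by multiplication, and the two features I will exploit are that this group is \emph{abelian} and that an automorphism is determined by its values on any generating set. The reverse implication is then immediate: under the first bullet all $a_i$ equal a common generator, so $a_{\sigma(i)}=a_i$ for every $\sigma$ and the identity automorphism works; under the second bullet the only nontrivial permutation is the transposition, which is realized by multiplication by $-1$. I would also clear away the small cases at the outset. Admissibility forces $k\ge 2$, and when $k=2$ it forces $a_2=-a_1$ with $a_1$ a generator; scaling $a_1$ to $1$ puts us in the first bullet if $n=2$ and the second if $n\ge 3$, while multiplication by $-1$ always realizes the transposition. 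Thus for $k=2$ both sides of the equivalence hold automatically, and everything reduces to proving the forward direction for $k\ge 3$, where the target is to show the hypothesis forces all $a_i$ to coincide (the relation $\sum a_i=0$ then becomes $k\equiv 0\bmod n$, giving the first bullet).

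For $k\ge 3$ I would first reduce to the case that the $a_i$ are pairwise distinct. Suppose instead that some value repeats, say $a_i=a_{i'}$ with $i\ne i'$, while some $a_j\ne a_i$. Since $a_{i'}=a_i\ne a_j$ we have $i'\ne j$, so the transposition $(i\ j)$ fixes the index $i'$. Applying the hypothesis to $(i\ j)$ gives $\psi\in\Aut(A)$ with $\psi(a_i)=a_j$ and $\psi(a_{i'})=a_{i'}$; but $a_i=a_{i'}$ then yields $a_j=\psi(a_i)=\psi(a_{i'})=a_i$, a contradiction. Hence, unless the $a_i$ are all equal, they are pairwise distinct.

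The core of the proof is the distinct case. Because the $a_i$ are distinct and generate $A$, the automorphism realizing a given permutation is unique, so $\sigma\mapsto\psi_\sigma$ defines a map $\rho\colon S_k\to\Aut(A)$; the computation $\psi_\sigma\psi_\tau(a_i)=\psi_\sigma(a_{\tau(i)})=a_{\sigma\tau(i)}=\psi_{\sigma\tau}(a_i)$ shows $\rho$ is a homomorphism. As $\Aut(A)$ is abelian and all transpositions are conjugate in $S_k$, every transposition has one and the same image $\epsilon$. Evaluating this single automorphism at $a_2$ in two ways is contradictory: the transposition $(1\ 3)$ fixes the index $2$, so $\epsilon(a_2)=a_2$, whereas $(2\ 3)$ sends index $2$ to index $3$, so $\epsilon(a_2)=a_3$; thus $a_2=a_3$, contradicting distinctness. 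Therefore the distinct case cannot occur when $k\ge 3$, the $a_i$ must all be equal, and the first bullet follows.

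The main obstacle is this last step, and it is exactly where cyclicity is essential rather than mere commutativity of $A$: only because $\Aut(A)$ is abelian do the a priori different automorphisms realizing different transpositions collapse to a single $\epsilon$, which then cannot both fix one generator and move another. The distinctness reduction is the necessary preparation, since with repeated values $\rho$ need not be single-valued; handling it correctly is the other place that requires care.
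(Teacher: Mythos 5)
Your proof is correct and follows essentially the same route as the paper: the same reduction to pairwise-distinct $a_i$ via a transposition fixing a repeated value, and the same exploitation of the fact that $\Aut(A)$ is abelian while $S_k$ is not for $k \geq 3$, with the same reverse direction (identity and $a \mapsto -a$). The only differences are presentational: the paper embeds $S_k$ into $\Aut(A)$ (using distinctness for injectivity) and concludes $k = 2$, whereas you collapse all transpositions to a single automorphism $\epsilon$ via conjugacy and get a contradiction by evaluating at $a_2$, and you dispatch the $k = 2$ case upfront rather than at the end of the forward direction.
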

\begin{proof}
For the forward direction, first suppose all the $a_i$ are equal.  Then each $a_i$ is a generator of $A$, and there is an isomorphism $\delta: A \to \ZZ/n\ZZ$ such that $\delta(a_i) = 1$ for all $i$.  The condition that $\sum_{i=1}^k a_i = 0$ implies that $k \equiv 0 \mod n$.

Suppose now that the $a_i$ are not all equal.  Then they must all be distinct.  Indeed, assume to the contrary that there exist three distinct elements $a_p,a_q,a_r$ of the admissible tuple such that $a_p = a_q \neq a_r$.  Let $\sigma \in S_k$ be the transposition that switches $q$ and $r$.  By assumption there exists $\psi \in \Aut(A)$ such that $\psi(a_i) = a_{\sigma(i)}$ for all $i$.  Therefore $a_p = \psi(a_p) = \psi(a_q) = a_r$, which is a contradiction.

We may therefore assume the $a_i$ are distinct.  Then there is a subgroup of $\Aut(A)$ isomorphic to the symmetric group $S_k$.  Since the automorphism group of a cyclic group is abelian, it must be that $k=2$.  Since $k=2$, we have that $a_1 = -a_2$ with $a_1$ a generator of $A$.  Since $a_1$ and $a_2$ are distinct, $\abs A \geq 3$.   Therefore the map $\delta:A \to \ZZ/n\ZZ$ with $\delta(a_1)=1$ and $\delta(a_2)=-1$ is an isomorphism when $n=|A|$.

For the converse, we must write down an appropriate automorphism for each permutation $\sigma \in S_k$.  In the case that $\delta(a_i) = 1$ for all $i$, the identity automorphism suffices for all permutations.  In the case where $k = 2$, $\delta(a_1) = 1$, and $\delta(a_2) = -1$, the automorphism $a \mapsto -a$ of $A$ suffices for the non-trivial permutation.
\end{proof}

We now prove the main result.
\begin{proof}[Proof of Theorem \ref{maintheorem}]
Any permutation of the branch points can be induced by a homeomorphism of the sphere.  Therefore, the result follows by combining Lemmas \ref{lifting_criterion} and \ref{perm_aut}.
\end{proof}

\bibliography{LiftingHomeomorphisms}

\begin{thebibliography}{1}

\bibitem{BH}
Joan~S. Birman and Hugh~M. Hilden.
\newblock On isotopies of homeomorphisms of {R}iemann surfaces.
\newblock {\em Ann. of Math. (2)}, 97:424--439, 1973.

\bibitem{erratum}
Joan~S. Birman and Hugh~M. Hilden.
\newblock Erratum to `{I}sotopies of homeomorphisms of {R}iemann surfaces'.
\newblock {\em Ann. of Math. (2)}, 185, 2017 (to appear).

\bibitem{BMP}
Tara Brendle, Dan Margalit, and Andrew Putman.
\newblock Generators for the hyperelliptic {T}orelli group and the kernel of
  the {B}urau representation at {$t=-1$}.
\newblock {\em Invent. Math.}, 200(1):263--310, 2015.

\bibitem{GV}
Sergey Gorchinskiy and Filippo Viviani.
\newblock Picard group of moduli of hyperelliptic curves.
\newblock {\em Math. Z.}, 258(2):319--331, 2008.

\bibitem{hidalgo}
Rub{\'e}n~A. Hidalgo.
\newblock Schottky double covers.
\newblock {\em Rev. Mat. Complut.}, 23(1):37--48, 2010.

\bibitem{JS}
Chris Johnson and Martin Schmoll.
\newblock Hyperelliptic translation surfaces and folded tori.
\newblock {\em Topology Appl.}, 161:73--94, 2014.

\bibitem{morifuji}
Takayuki Morifuji.
\newblock On {M}eyer's function of hyperelliptic mapping class groups.
\newblock {\em J. Math. Soc. Japan}, 55(1):117--129, 2003.

\bibitem{Rohde}
Jan~Christian Rohde.
\newblock {\em Cyclic coverings, {C}alabi-{Y}au manifolds and complex
  multiplication}, volume 1975 of {\em Lecture Notes in Mathematics}.
\newblock Springer-Verlag, Berlin, 2009.

\end{thebibliography}
\end{document}